\newcommand{\bbbn}{{\mathbb N}}
\newcommand{\bbbr}{{\mathbb R}}
\newcommand{\Idx}{{\mathcal I}}
\newcommand{\Jdx}{{\mathcal J}}
\newcommand{\ctI}{{\mathcal T}_{\Idx}}
\newcommand{\ctJ}{{\mathcal T}_{\Jdx}}
\newcommand{\lfI}{{\mathcal L}_{\Idx}}
\newcommand{\ctIJ}{{\mathcal T}_{\Idx\times\Jdx}}
\newcommand{\lfIJ}{{\mathcal L}_{\Idx\times\Jdx}}
\newcommand{\lfaIJ}{{\mathcal L}^+_{\Idx\times\Jdx}}
\newcommand{\lfiIJ}{{\mathcal L}^-_{\Idx\times\Jdx}}
\newcommand{\ctII}{{\mathcal T}_{\Idx\times\Idx}}
\newcommand{\lfII}{{\mathcal L}_{\Idx\times\Idx}}
\newcommand{\ctIII}{{\mathcal T}_{\Idx\times\Idx\times\Idx}}
\newcommand{\ctt}{{\mathcal T}_{\hat t_0}}
\newcommand{\cts}{{\mathcal T}_{\hat s_0}}
\newcommand{\ctts}{{\mathcal T}_{\hat t_0\times\hat s_0}}
\newcommand{\cttsr}{{\mathcal T}_{\hat t_0\times\hat s_0\times\hat r_0}}
\newcommand{\sons}{\mathop{\operatorname{sons}}\nolimits}
\newcommand{\treeroot}{\mathop{\operatorname{root}}\nolimits}
\newcommand{\pred}{\mathop{\operatorname{pred}}\nolimits}
\newcommand{\row}{\mathop{\operatorname{row}}\nolimits}
\newcommand{\qed}{}
\newtheorem{theorem}{Theorem}
\newtheorem{corollary}[theorem]{Corollary}
\newtheorem{definition}[theorem]{Definition}
\newtheorem{remark}[theorem]{Remark}
\newenvironment{proof}{\emph{Proof.}}{\strut\hfill $\Box$\medskip}
\title{Efficient arithmetic operations for rank-structured
       matrices based on hierarchical low-rank updates}
\author{Steffen B\"orm and Knut Reimer}
\date{\today}
\begin{document}
\maketitle
\begin{abstract}
Many matrices appearing in numerical me\-thods for partial differential
equations and integral equations are \emph{rank-structured}, i.e.,
they contain submatrices that can be approximated by matrices of
low rank.
A relatively general class of rank-structured matrices are
${\mathcal H}^2$-matrices: they can reach the optimal order of
complexity, but are still general enough for a large number of
practical applications.

We consider algorithms for performing algebraic operations
with ${\mathcal H}^2$-ma\-tri\-ces, i.e., for approximating the
matrix product, inverse or factorizations in almost linear
complexity.
The new approach is based on local low-rank updates that can be
performed in linear complexity.
These updates can be combined with a recursive procedure to
approximate the product of two ${\mathcal H}^2$-matrices, and
these products can be used to approximate the matrix inverse and
the LR or Cholesky factorization.

Numerical experiments indicate that the new algorithm leads to
preconditioners that require ${\mathcal O}(n)$ units of sto\-rage,
can be evaluated in ${\mathcal O}(n)$ operations, and take
${\mathcal O}(n \log n)$ operations to set up.
\end{abstract}

\thanks{\noindent Part of this research was funded by the Deutsche
  Forschungsgemeinschaft in the context of project BO 3289/4-1.}

\section{Introduction}
We consider an elliptic partial differential equation of the form
\begin{align*}
  -\mathop{\operatorname{div}} \sigma(x) \mathop{\operatorname{grad}}
  u(x) &= f(x) &
  &\text{ for all } x\in\Omega,\\
  u(x) &= 0 &
  &\text{ for all } x\in\partial\Omega,
\end{align*}
where $\Omega\subseteq\bbbr^d$ is a domain and
$\sigma:\Omega\to\bbbr^{d\times d}$ is uniformly symmetric positive
definite.

Using a Galerkin discretization with a finite element basis
$(\varphi_i)_{i\in\Idx}$ leads to a linear system
\begin{equation*}
  A x = b,
\end{equation*}
where the right-hand side $b\in\bbbr^\Idx$ corresponds to $f$ and
the solution $x\in\bbbr^\Idx$ represents the approximation of $u$.

In order to reach a sufficiently accurate approximation, it is
usually necessary to work with a large number $n=\#\Idx$ of
basis functions, therefore the linear system can become very large.

We are interested in constructing a good preconditioner
$B\in\bbbr^{\Idx\times\Idx}$, i.e., a matrix that can be evaluated
efficiently and that ensures that iterative solvers like the
preconditioned cg iteration \cite{HEST52} converge rapidly.

In this paper, we consider preconditioners based on rank-structured
matrices.
It has been proven \cite{BEHA03,BO07,FAMEPR13} that suitably chosen
submatrices $A^{-1}|_{\hat t\times\hat s}$, $\hat t,\hat s\subseteq\Idx$,
of the inverse of $A$ can be approximated by low-rank matrices.
The same holds for certain submatrices of the triangular factors $L$
and $R$ of the LR factorization $A = LR$ \cite{BE05,GRKRLE05a,FAMEPR13}.
These approximations have the remarkable property that they do not
depend on the smoothness of the coefficient function $\sigma$, even
discontinuous coefficients are permitted.
Similar results hold for the inverses of matrices resulting from the
Galerkin discretization of boundary integral operators \cite{FAMEPR13b}.

\emph{Hierarchical matrices} \cite{HA99,HAKH00,GRHA02,HA09} take
advantage of this property to construct approximations of $A^{-1}$ in
rank-structured representation:
a hierarchical decomposition of the index set $\Idx\times\Idx$
into appropriate subsets $\hat t\times\hat s$ is used to determine
submatrices that can be approximated by low-rank matrices, and
these matrices are represented in factorized form.
The result requires only ${\mathcal O}(n k \log n)$ units of storage,
where $k$ denotes the rank of the submatrices and depends on the
required accuracy.

In order to obtain an approximation of $A^{-1}$, hierarchical matrices
use block-wise constructions based on the multiplication of submatrices
and replace the exact matrix products by approximations that can be
computed efficiently.

A closer look at the theoretical results \cite{BO05} suggests that
the efficiency of the approximation can be improved by representing
the low-rank submatrices using hierarchically nested bases.
This leads to \emph{${\mathcal H}^2$-matrices}, originally developed
for integral operators \cite{HAKHSA00,BOHA02}.
Using an ${\mathcal H}^2$-matrix representation reduces the storage
requirements to ${\mathcal O}(n k)$.

Constructing a good ${\mathcal H}^2$-matrix preconditioner is
a challenging task.
So far, two approaches have been published:
if the nested bases are given a priori, the matrix multiplication can
be carried out in ${\mathcal O}(n k^2)$ operations \cite{BO04a},
and the inversion and the LR factorization can be reduced
to a sequence of matrix multiplications leading to a similar
complexity estimate.

In practice, the bases are typically not known in advance, so we have
to construct them during the course of our algorithm.
The method presented in \cite{BO10} is able to construct an
approximation of the matrix product in ${\mathcal O}(n k^2 \log n)$
operations, but requires ${\mathcal O}(n k^2 \log^2 n)$ operations
for the inverse or the LR factorization, i.e., it is not faster than
comparable algorithms for simple hierarchical matrices.

In very special cases, e.g., for essentially one-di\-men\-sio\-nal problems,
HSS-matrices \cite{CHGULY05,MAROTY05,CHGULIXI09b}, a special case of
${\mathcal H}^2$-matrices using a very simple block partition, allow
us to construct bases on the fly in optimal complexity.
Applications of HSS-matrices to two-dimensional geometries rely on
special properties to reduce to one-dimensional subproblems
\cite{MA08,XICHGULI09}, while no HSS-algo\-rithms of quasi-linear complexity
for three-dimensional problems are currently known.

In this paper, we present a new algorithm for approximating the
product of two ${\mathcal H}^2$-matrices.
Although this algorithm also requires ${\mathcal O}(n k^2 \log n)$
operations, it can be used to compute LR and Cholesky factorizations as
well as an approximate inverse in ${\mathcal O}(n k^2 \log n)$ operations,
making it significantly more efficient than previous methods.

The key feature of the new algorithm is the use of efficient
local low-rank updates:
given an ${\mathcal H}^2$-matrix $G$, we can add a low-rank
matrix $R\in\bbbr^{\hat t\times\hat s}$ to a submatrix
$G|_{\hat t\times\hat s}$ and ensure that the result is again
\emph{globally} an ${\mathcal H}^2$-matrix.
This update requires only ${\mathcal O}(k^2 (\#\hat t+\#\hat s))$
operations.
Since these update operations are at the heart of most important
algorithms for hierarchical matrices, the new algorithm allows
us to extend them to ${\mathcal H}^2$-matrices, reducing both
the algorithmic complexity and the storage requirements.

The paper is organized as follows:
the next section introduces the basic definitions of hierarchical
and ${\mathcal H}^2$-matrices.
It is followed by a section outlining the ideas of formatted
algebraic matrix operations that demonstrates that the most important
operations can be reduced to a sequence of local low-rank updates.
The next section describes how a low-rank update can be applied
efficiently to an ${\mathcal H}^2$-matrix, and it is followed by
a section considering the modifications required to perform
low-rank updates of submatrices efficiently.
The last section is devoted to numerical experiments illustrating
that the new algorithm takes ${\mathcal O}(n \log n)$ operations
to set up efficient preconditioners in ${\mathcal O}(n)$ units
of storage for FEM and BEM applications, i.e., that choosing an on
average constant rank is sufficient.

\section[H2-matrices]{${\mathcal H}^2$-matrices}

Hierarchical matrices, like most representation schemes for
rank-structured matrices, are based on a hierarchical subdivision
of the index set.
This subdivision can be expressed by a tree structure.

%
%
\begin{definition}[Cluster tree]
Let $\Idx$ be a finite index set, and let ${\mathcal T}$ be a
labeled tree.
We write $t\in\mathcal{T}$ if $t$ is a node in $\mathcal{T}$.
The set of sons of $t\in\mathcal{T}$ is denoted by
$\sons(t)\subseteq\mathcal{T}$, and the label of each node
$t\in{\mathcal T}$ by $\hat t$.

${\mathcal T}$ is called a \emph{cluster tree} for $\Idx$ if
the following conditions hold:
\begin{itemize}
  \item its root $r=\treeroot({\mathcal T})$ is labeled by $\Idx$,
        i.e., $\hat r=\Idx$,
  \item for each $t\in{\mathcal T}$ with $\sons(t)\neq\emptyset$,
        we have $\hat t = \bigcup_{t'\in\sons(t)} \hat t'$,
  \item for all $t\in{\mathcal T}$ and $t_1,t_2\in\sons(t)$
        with $t_1\neq t_2$, we have $\hat t_1\cap\hat t_2=\emptyset$.
\end{itemize}
A cluster tree for $\Idx$ is denoted by $\ctI$, its nodes are called
\emph{clusters}, and the set of its leaves is denoted by
$\lfI := \{ t\in\ctI\ :\ \sons(t)=\emptyset \}$.
\end{definition}

We note that the first two conditions imply $\hat t\subseteq\Idx$
for all $t\in\ctI$, and that all three conditions together imply that
the labels of the leaves form a disjoint partition $\{ \hat t\ :\ t\in\lfI \}$
of the index set $\Idx$.

Since we are interested in a partition of a matrix, we construct
a special cluster tree for product index sets $\Idx\times\Jdx$.

%
%
\begin{definition}[Block tree]
Let $\ctI$ and $\ctJ$ be cluster trees for index sets $\Idx$
and $\Jdx$, respectively.

A labeled tree ${\mathcal T}$ is called a \emph{block tree} for
$\ctI$ and $\ctJ$ if the following conditions hold:
\begin{itemize}
  \item for each $b\in{\mathcal T}$, there are $t\in\ctI$
        and $s\in\ctJ$ such that $b=(t,s)$ and $\hat b=\hat t\times\hat s$,
  \item the root $r=\treeroot({\mathcal T})$ of the tree $\mathcal{T}$
        is the pair of the roots of $\ctI$ and $\ctJ$, i.e.,
        $r=(\treeroot(\ctI),\treeroot(\ctJ))$,
  \item for each $b=(t,s)\in{\mathcal T}$ with $\sons(b)\neq\emptyset$,
        we have
        \begin{equation*}
          \sons(b) = \begin{cases}
            \{t\}\times\sons(s)
            &\text{ if } \sons(t)=\emptyset,\\
            \sons(t)\times\{s\}
            &\text{ if } \sons(s)=\emptyset,\\
            \sons(t)\times\sons(s)
            &\text{ otherwise}.
          \end{cases}
        \end{equation*}
\end{itemize}
A block tree for $\ctI$ and $\ctJ$ is denoted by $\ctIJ$, its nodes
are called \emph{blocks}, and the set of its leaves is denoted by
$\lfIJ := \{ b\in\ctIJ\ :\ \sons(b)=\emptyset \}$.

For each block $b=(t,s)\in\ctIJ$, we call $t$ the \emph{row cluster}
(or \emph{target cluster}) and $s$ the \emph{column cluster}
(or \emph{source cluster}).
\end{definition}

We note that the definitions imply that a block tree $\ctIJ$ is a
cluster tree for $\Idx\times\Jdx$ and that therefore the set
$\{ \hat t\times\hat s \ :\ b=(t,s)\in\lfIJ \}$ is a disjoint
partition of this index set.
We use this partition to split matrices into submatrices.

In order to determine which of these submatrices can be approximated
by low-rank representations, we split the set $\lfIJ$ of leaf blocks
into a set $\lfaIJ$ of \emph{admissible} blocks and the remainder
$\lfiIJ := \lfIJ\setminus\lfaIJ$ of \emph{inadmissible} blocks.

We represent the admissible blocks $b=(t,s)\in\lfaIJ$ in the form
\begin{equation*}
  G|_{\hat t\times\hat s} \approx V_t S_b W_s^*,
\end{equation*}
where the columns of $V_t$ and $W_s$ are interpreted as basis vectors
for subsets of $\bbbr^{\hat t}$ and $\bbbr^{\hat s}$ and $S_b$ contains
the coefficients corresponding to these basis vectors.
We require the basis vectors to be \emph{hierarchically nested}:

%
%
\begin{definition}[Cluster basis]
Let $k\in\bbbn$, and let $(V_t)_{t\in\ctI}$ be a family of matrices
satisfying $V_t\in\bbbr^{\hat t\times k}$ for all $t\in\ctI$.

This family is called a \emph{(nested) cluster basis} if for
each $t\in\ctI$ there is a matrix $E_t\in\bbbr^{k\times k}$ such
that
\begin{align}\label{eq:nested}
  V_t|_{\hat t'\times k} &= V_{t'} E_{t'} &
  &\text{ for all } t\in\ctI,\ t'\in\sons(t).
\end{align}
The matrices $E_t$ are called \emph{transfer matrices}, and
$k$ is called the \emph{rank} of the cluster basis.
\end{definition}

Due to (\ref{eq:nested}), we only have to store the matrices
$V_t$ for leaf clusters $t\in\lfI$ and the transfer matrices
$E_t$ for all clusters $t\in\ctI$.
In typical situations, this \emph{nested representation} requires
only ${\mathcal O}(n_\Idx k)$ units of storage \cite{BOHA02,BO10}, where
$n_\Idx:=\#\Idx$ denotes the cardinality of the index set $\Idx$.

%
%
\begin{definition}[${\mathcal H}^2$-matrix]
Let $\ctI$ and $\ctJ$ be cluster trees for index sets $\Idx$ and
$\Jdx$, let $\ctIJ$ be a block tree for $\ctI$ and $\ctJ$, and let
$(V_t)_{t\in\ctI}$ and $(W_s)_{s\in\ctJ}$ be cluster bases.

A matrix $G\in\bbbr^{\Idx\times\Jdx}$ is called an
\emph{${\mathcal H}^2$-matrix} for $\ctIJ$, $(V_t)_{t\in\ctI}$
and $(W_s)_{s\in\ctJ}$, if for each admissible block $b=(t,s)\in\lfaIJ$
there is a matrix $S_b\in\bbbr^{k\times k}$ such that
\begin{equation}\label{eq:vsw}
  G|_{\hat t\times\hat s} = V_t S_b W_s^*.
\end{equation}
The matrices $S_b$ are called \emph{coupling matrices}, the cluster
bases $(V_t)_{t\in\ctI}$ and $(W_s)_{s\in\ctJ}$ are called \emph{row and
column cluster bases}.
\end{definition}

We can represent an ${\mathcal H}^2$-matrix by the cluster bases,
the coupling matrices $S_b$ for all admissible blocks $b\in\lfaIJ$ and
the \emph{nearfield matrices} $G|_{\hat t\times\hat s}$ for all inadmissible
blocks $b=(t,s)\in\lfiIJ$.
In typical situations, this \emph{${\mathcal H}^2$-matrix representation}
requires only ${\mathcal O}((n_\Idx+n_\Jdx) k)$ units of storage
\cite{BOHA02,BO10}.

If we want to approximate a given matrix $G\in\bbbr^{\Idx\times\Jdx}$
by an ${\mathcal H}^2$-matrix $\widetilde G$ for cluster bases
$(V_t)_{t\in\ctI}$ and $(W_s)_{s\in\ctJ}$, \emph{orthogonal cluster bases}
are very useful:
if we assume
\begin{align*}
  V_t^* V_t &= I, &
  W_s^* W_s &= I &
  &\text{ for all } t\in\ctI,\ s\in\ctJ,
\end{align*}
the optimal coupling matrices (with respect both to the Frobenius norm
and the spectral norm) are given by
\begin{align*}
  S_b &:= V_t^* G|_{\hat t\times\hat s} W_s &
  &\text{ for all } b=(t,s)\in\lfaIJ.
\end{align*}
This property can be used to compute the best approximation of the
product of ${\mathcal H}^2$-matrices in ${\mathcal O}(n k^2)$ operations
\cite{BO04a} as long as both cluster bases are known in advance.

\section{Algebraic operations}
\label{se:algebraic_operations}

We are looking for a preconditioner for a matrix $A\in\bbbr^{\Idx\times\Idx}$
corresponding to a Galerkin discretization of an elliptic partial
differential equation or an integral equation.
According to \cite{LI04,BE05,GRKRLE05a,FAMEPR13,FAMEPR13b}, we can expect
to be able to approximate the matrices $L,R\in\bbbr^{\Idx\times\Idx}$ of the
standard triangular LR factorization by ${\mathcal H}^2$-matrices.

For the sake of brevity, we restrict our discussion to the case of
binary cluster trees, i.e., each cluster has either two sons or none.

Since $A$ is a quadratic matrix, we can use the same cluster tree $\ctI$
for its rows and columns.
We denote the corresponding block tree by $\ctII$.

The construction of the LR factorization is performed by a recursive
procedure:
let $t\in\ctI$.
We are looking for the LR factorization of $A|_{\hat t\times\hat t}$.

If $\sons(t)=\emptyset$, we can assume that $A|_{\hat t\times\hat t}$
is a small matrix, therefore we can construct the LR factorization
by standard Gaussian elimination.

If $\sons(t)=\{t_1,t_2\}$, on the other hand, we split the matrices
into submatrices
\begin{subequations}\label{eq:submatrices}
\begin{align}
  A_{11} &:= A|_{\hat t_1\times\hat t_1}, &
  A_{12} &:= A|_{\hat t_1\times\hat t_2},\\
  A_{21} &:= A|_{\hat t_2\times\hat t_1}, &
  A_{22} &:= A|_{\hat t_2\times\hat t_2},\\
  L_{11} &:= L|_{\hat t_1\times\hat t_1}, &
  L_{21} &:= L|_{\hat t_2\times\hat t_1},\\
  L_{22} &:= L|_{\hat t_2\times\hat t_2}, &
  R_{11} &:= R|_{\hat t_1\times\hat t_1},\\
  R_{12} &:= R|_{\hat t_1\times\hat t_2}, &
  R_{22} &:= R|_{\hat t_2\times\hat t_2}
\end{align}
\end{subequations}
and consider the block equation
\begin{align*}
  \begin{pmatrix}
    A_{11} & A_{12}\\
    A_{21} & A_{22}
  \end{pmatrix}
  &= A|_{\hat t\times\hat t}
   = L|_{\hat t\times\hat t} R|_{\hat t\times\hat t}\\
  &= \begin{pmatrix}
     L_{11} & \\
     L_{21} & L_{22}
   \end{pmatrix}
   \begin{pmatrix}
     R_{11} & R_{12}\\
     & R_{22}
   \end{pmatrix}
   = \begin{pmatrix}
     L_{11} R_{11} & L_{11} R_{12}\\
     L_{21} R_{11} & L_{22} R_{22} + L_{21} R_{12}
   \end{pmatrix}.
\end{align*}
We can solve $L_{11} R_{11} = A_{11}$ by recursion and obtain $L_{11}$
and $R_{11}$.
Then we can solve the triangular systems $L_{11} R_{12} = A_{12}$ and
$L_{21} R_{11} = A_{21}$ by forward substitution to obtain $R_{12}$
and $L_{21}$.
In a last step, we can solve $L_{22} R_{22} = A_{22} - L_{21} R_{12}$
by recursion to obtain $L_{22}$ and $R_{22}$, completing the algorithm.

The block forward substitution can also be handled by recursion:
in order to solve $L X = Y$ to obtain $X\in\bbbr^{\hat t\times\hat s}$
given $Y\in\bbbr^{\hat t\times\hat s}$, we let
\begin{align*}
  X_1 &:= X|_{\hat t_1\times\hat s}, &
  X_2 &:= X|_{\hat t_2\times\hat s},\\
  Y_1 &:= Y|_{\hat t_1\times\hat s}, &
  Y_2 &:= Y|_{\hat t_2\times\hat s}
\end{align*}
and consider
\begin{align*}
  \begin{pmatrix}
    Y_1\\ Y_2
  \end{pmatrix}
  &= Y = L|_{\hat t\times\hat t} X
   = \begin{pmatrix}
     L_{11} & \\
     L_{21} & L_{22}
   \end{pmatrix}
   \begin{pmatrix}
     X_1\\ X_2
   \end{pmatrix}
   = \begin{pmatrix}
     L_{11} X_1\\
     L_{21} X_1 + L_{22} X_2
   \end{pmatrix}.
\end{align*}
We can solve $L_{11} X_1 = Y_1$ by recursion and obtain $X_1$.
In the second step, we solve $L_{22} X_2 = Y_2 - L_{21} X_1$ again
by recursion and obtain $X_2$, completing the algorithm.

For the system $X R = Y$ with $X,Y\in\bbbr^{\hat s\times\hat t}$, we
can follow a similar approach:
we let
\begin{align*}
  X_1 &:= X|_{\hat s\times\hat t_1}, &
  X_2 &:= X|_{\hat s\times\hat t_2},\\
  Y_1 &:= Y|_{\hat s\times\hat t_1}, &
  Y_2 &:= Y|_{\hat s\times\hat t_2}
\end{align*}
and arrive at the block equation
\begin{align*}
  \begin{pmatrix}
    Y_1 & Y_2
  \end{pmatrix}
  &= Y = X R|_{\hat t\times\hat t}
   = \begin{pmatrix}
      X_1 & X_2
    \end{pmatrix}
    \begin{pmatrix}
      R_{11} & R_{12}\\
      & R_{22}
    \end{pmatrix}
   = \begin{pmatrix}
      X_1 R_{11} & X_1 R_{12} + X_2 R_{22}
   \end{pmatrix},
\end{align*}
requiring us to solve $X_1 R_{11} = Y_1$ and $X_2 R_{22} = Y_2 - X_1 R_{12}$
by recursion.

We conclude that we only need an efficient algorithm for computing
the matrix product in order to derive efficient algorithms for the
block forward substitution and the LR factorization.
The matrix inverse can be constructed by a similar procedure
(cf. Remark~\ref{rm:inversion}).

Both the inversion and the LR factorization require updates to
submatrices of the form
\begin{equation*}
  Z|_{\hat t\times\hat r}
  \gets Z|_{\hat t\times\hat r}
          + \alpha X|_{\hat t\times\hat s} Y|_{\hat s\times\hat r}
\end{equation*}
with $(t,r),(t,s),(s,r)\in\ctIJ$ and $\alpha\in\bbbr$.
If both $(t,s)$ and $(s,r)$ are not leaves, we can use recursion
once again.
For the sake of brevity, we consider only the most general case that
none of the clusters is a leaf, i.e., that we have
$\sons(t)=\{t_1,t_2\}$, $\sons(s)=\{s_1,s_2\}$ and $\sons(r)=\{r_1,r_2\}$.
As before, we split $X|_{\hat t\times\hat s}$, $Y|_{\hat s\times\hat r}$ and
$Z|_{\hat t\times\hat r}$ into submatrices
\begin{gather*}
  X|_{\hat t\times\hat s} = \begin{pmatrix}
    X_{11} & X_{12}\\
    X_{21} & X_{22}
  \end{pmatrix},\qquad
  Y|_{\hat s\times\hat r} = \begin{pmatrix}
    Y_{11} & Y_{12}\\
    Y_{21} & Y_{22}
  \end{pmatrix},\qquad
  Z|_{\hat t\times\hat r} = \begin{pmatrix}
    Z_{11} & Z_{12}\\
    Z_{21} & Z_{22}
  \end{pmatrix}
\end{gather*}
and see that the product can be computed by recursively
carrying out the updates
\begin{align*}
  Z_{11} &\gets Z_{11} + \alpha X_{11} Y_{11}, &
  Z_{12} &\gets Z_{12} + \alpha X_{11} Y_{12},\\
  Z_{21} &\gets Z_{21} + \alpha X_{21} Y_{11}, &
  Z_{22} &\gets Z_{22} + \alpha X_{21} Y_{12},\\[3pt]
  Z_{11} &\gets Z_{11} + \alpha X_{12} Y_{21}, &
  Z_{12} &\gets Z_{12} + \alpha X_{12} Y_{22},\\
  Z_{21} &\gets Z_{21} + \alpha X_{22} Y_{21}, &
  Z_{22} &\gets Z_{22} + \alpha X_{22} Y_{22}.
\end{align*}
If $(t,r)$ is a leaf, we can temporarily create submatrices
$Z_{11},Z_{12},Z_{21}$ and $Z_{22}$ to receive the result of the
recursive procedure.
These submatrices can then be extended with zeros and added to the
block $Z|_{\hat t\times\hat r}$.

If $(t,s)$ or $(s,r)$ is a leaf block, the term $X|_{\hat t\times\hat s}
Y|_{\hat s\times\hat r}$ is a low-rank matrix, and a factorized
representation can be obtained easily by multiplying the factorized
representation of the low-rank matrix with the other matrix
\cite[Sections 3.2 and 3.3]{BOHA02}.

In both cases, we need an efficient algorithm that adds a low-rank
matrix in factorized representation to an existing ${\mathcal H}^2$-matrix.
For hierarchical matrices, this task can be handled by a relatively
simple approach:
we split the low-rank matrix into submatrices matching the block
structure of the target matrix, then perform a truncated addition
of the submatrices using the singular value decomposition.

Adding a low-rank matrix to a given $\mathcal{H}^2$-matrix is considerably
more challenging, since the blocks share row and column cluster bases,
so changing one block changes the entire corresponding block row and column.

\section{Recompression}

We first consider a \emph{global} low-rank update.
Let $Z\in\bbbr^{\Idx\times\Jdx}$ be an ${\mathcal H}^2$-matrix with
row cluster basis $(V_t)_{t\in\ctI}$ and column cluster basis
$(W_s)_{s\in\ctJ}$ for a block tree $\ctIJ$.

Let $X\in\bbbr^{\Idx\times k}$ and $Y\in\bbbr^{\Jdx\times k}$.
We are looking for an efficient algorithm for approximating the
sum $\widetilde Z := Z + X Y^*$.

Our starting point is a simple observation:
for each admissible block $b=(t,s)\in\lfaIJ$, we have
\begin{align*}
  \widetilde{Z}|_{\hat t\times\hat s} &= (Z + X Y^*)|_{\hat t\times\hat s}
   = Z|_{\hat t\times\hat s} + X|_{\hat t\times k} Y|_{\hat s\times k}^*\\
  &= V_t S_b W_s^* + X|_{\hat t\times k} I Y|_{\hat s\times k}^*\\
  &= \begin{pmatrix}
       V_t & X|_{\hat t\times k}
     \end{pmatrix}
     \begin{pmatrix}
       S_b & \\
       & I
     \end{pmatrix}
     \begin{pmatrix}
       W_s & Y|_{\hat s\times k}
     \end{pmatrix}^*,
\end{align*}
i.e., all submatrices are already given in factorized form.
By introducing new cluster bases and coupling matrices
\begin{subequations}\label{eq:tildeZ_h2}
\begin{align}
  \widetilde V_t &:= \begin{pmatrix}
    V_t & X|_{\hat t\times k}
  \end{pmatrix}, &
  &\text{ for all } t\in\ctI,\\
  \widetilde W_s &:= \begin{pmatrix}
    W_s & Y|_{\hat s\times k}
  \end{pmatrix}, &
  &\text{ for all } s\in\ctJ,\\
  \widetilde S_b &:= \begin{pmatrix}
    S_b & \\
    & I
  \end{pmatrix} &
  &\text{ for all } b\in\lfaIJ,
\end{align}
\end{subequations}
we have found an \emph{exact} ${\mathcal H}^2$-matrix representation
of $\widetilde Z = Z+X Y^*$.
The transfer matrices for $(\widetilde V_t)_{t\in\ctI}$ and
$(\widetilde W_s)_{s\in\ctJ}$ can be easily obtained by extending
the transfer matrices of the original cluster bases with the
identity matrix, similar to the way the coupling matrices $S_b$
are extended to $\widetilde S_b$.

Unfortunately the rank of this representation $\widetilde Z$ has doubled
compared to the original ${\mathcal H}^2$-matrix $Z$, therefore using it
repeatedly as required by the matrix multiplication would quickly lead
to very large ranks and a very inefficient algorithm.

We can fix this problem by using a \emph{recompression algorithm}
that takes an ${\mathcal H}^2$-matrix and approximates it by an
${\mathcal H}^2$-matrix of lower rank.
This closely resembles the truncation operation that is at the heart of
algebraic operations for hierarchical matrices.

An appropriate algorithm is described in \cite[Section~6.6]{BO10}:
we aim to construct an improved orthogonal row cluster basis
$(Q_t)_{t\in\ctI}$.
A column cluster basis can be handled similarly, replacing
$\widetilde Z$ by $\widetilde Z^*$.
The matrix $Q_t$ should be chosen in such a way that all blocks
connected to the cluster $t$ or one of its predecessors
\begin{equation*}
  \pred(t) := \begin{cases}
    \{ t \} &\text{ if } t=\treeroot(\ctI),\\
    \{ t \} \cup \pred(t^+) &\text{ for } t^+ \text{ with } t\in\sons(t^+)
  \end{cases}
\end{equation*}
can be approximated in the range of $Q_t$.
If we denote the set of all these clusters by
\begin{align*}
  \row^*(t) &:= \bigcup_{t^*\in\pred(t)} \row(t^*),\\
  \row(t) &:= \{ s\in\ctJ\ :\ (t,s)\in\lfaIJ \},
\end{align*}
and take advantage of the fact that we are looking for an orthogonal
cluster basis, we have to ensure
\begin{align*}
  Q_t Q_t^* \widetilde Z|_{\hat t\times\hat s}
  &\approx \widetilde Z|_{\hat t\times\hat s} &
  &\text{ for all } s\in\row^*(t).
\end{align*}
The task of finding an orthogonal basis that approximates multiple
matrices simultaneously can be simplified by combining all of these
matrices in a large matrix:
we let $\tau:=\#\row^*(t)$ and $\row^*(t)=\{s_1,\ldots,s_\tau\}$ and
introduce the matrix
\begin{equation*}
  \widetilde Z_t := \begin{pmatrix}
    \widetilde Z|_{\hat t\times\hat s_1} & \ldots &
    \widetilde Z|_{\hat t\times\hat s_\tau}
  \end{pmatrix}.
\end{equation*}
If we have found an orthogonal matrix $Q_t$ of low rank satisfying
\begin{equation}\label{eq:Qt_condition}
  Q_t Q_t^* \widetilde Z_t \approx \widetilde Z_t,
\end{equation}
we have solved our problem.
Applying the singular value decomposition directly to this task would
lead to an algorithm of at least quadratic complexity.

Fortunately, we can take advantage of the fact that $\widetilde Z_t$
is an ${\mathcal H}^2$-matrix:
for each $s\in\row(t)$, we have $(t,s)\in\lfaIJ$ by definition
and therefore
\begin{equation*}
  \widetilde Z|_{\hat t\times\hat s}
  = \widetilde V_t \widetilde S_b \widetilde W_s^*
  = \widetilde V_t B_{t,s}^*
\end{equation*}
with $B_{t,s} := \widetilde W_s \widetilde S_b^* \in \bbbr^{\hat s\times (2k)}$.
Since the cluster basis $(\widetilde V_t)_{t\in\ctI}$ is nested, we can
extend this result:
for each $s\in\row^*(t)$, we find a matrix $B_{t,s}\in\bbbr^{\hat s\times(2k)}$
with
\begin{equation*}
  \widetilde Z|_{\hat t\times\hat s}
  = \widetilde V_t B_{t,s}^*,
\end{equation*}
therefore we have
\begin{align}
  \widetilde Z_t
  &= \begin{pmatrix}
       \widetilde Z|_{\hat t\times\hat s_1} & \ldots &
       \widetilde Z|_{\hat t\times\hat s_\tau}
     \end{pmatrix}
   = \begin{pmatrix}
       \widetilde V_t B_{t,s_1}^* & \ldots & \widetilde V_t B_{t,s_\tau}^*
     \end{pmatrix}
   = \widetilde V_t B_t^*,\label{eq:Zt_factors}\\
\intertext{with}
  B_t &:= \begin{pmatrix}
    B_{t,s_1}\\ \vdots\\ B_{t,s_\tau}
  \end{pmatrix}.\notag
\end{align}
We conclude that the matrices $\widetilde Z_t$ are of low rank and
given in factorized form.

If we solve (\ref{eq:Qt_condition}) by computing the singular value
decomposition of $\widetilde Z_t$, we only require the left singular
vectors and the singular values to construct $Q_t$.
This means that applying orthogonal transformations to the columns
of $\widetilde Z_t$ will not change the result of our algorithm.
We can take advantage of this property to reduce the computational
work:
let $P_t \widetilde B_t = B_t$ be a QR decomposition of $B_t$ with
$\widetilde B_t\in\bbbr^{(2k)\times(2k)}$ and an orthogonal matrix $P_t$.
Replacing $B_t^*$ by $\widetilde B_t^* P_t^*$ in (\ref{eq:Zt_factors})
yields that (\ref{eq:Qt_condition}) is equivalent to
\begin{equation}\label{eq:Qt_condensed}
  Q_t Q_t^* \widetilde V_t \widetilde B_t^*
  \approx \widetilde V_t \widetilde B_t^*,
\end{equation}
so $Q_t$ can be obtained by computing the singular value decomposition
of the matrix $\widetilde V_t \widetilde B_t^*$ with only $2k$ columns.
The \emph{weight matrices} $\widetilde B_t$ capture the re\-la\-tive
importance of the columns of $\widetilde V_t$ and are very important
for controlling the approximation error.

Finding the QR decomposition $P_t \widetilde B_t = B_t$ directly
would again lead to an algorithm of quadratic complexity, but
we can once more take advantage of the properties of
${\mathcal H}^2$-matrices:
the cluster basis $(\widetilde V_t)_{t\in\ctI}$ is nested.
If we have $s\in\row^*(t)$ with $(t,s)\not\in\lfaIJ$, the definition
implies $s\in\row^*(t^+)$, where $t^+$ denotes the father of $t$.
Therefore we can find $B_{t^+,s}\in\bbbr^{\hat s\times(2k)}$ such that
\begin{equation*}
  \widetilde Z|_{\hat t^+\times\hat s}
  = \widetilde V_{t^+} B_{t^+,s}^*,
\end{equation*}
and (\ref{eq:nested}) yields
\begin{equation*}
  \widetilde Z|_{\hat t\times\hat s}
  = \widetilde V_{t^+}|_{\hat t\times k} B_{t^+,s}^*
  = \widetilde V_t \widetilde E_t B_{t^+,s}^*,
\end{equation*}
where $\widetilde E_t$ denotes the transfer matrix corresponding
to the cluster $t$.

This means that all blocks connected to ``strict'' predecessors
of $t$ are also present in $B_{t^+}$, and at least part of the
QR decomposition of $B_t$ can be inherited from the father $t^+$.
Let $\varrho := \#\row(t)$ and $\{s_1,\ldots,s_\varrho\}:=\row(t)$.
Then we can write $B_t$ as
\begin{equation*}
  B_t
  = \begin{pmatrix}
      B_{t,s_1}\\
      \vdots\\
      B_{t,s_\varrho}\\
      B_{t^+} \widetilde E_t^*
    \end{pmatrix}
  = \begin{pmatrix}
      B_{t,s_1}\\
      \vdots\\
      B_{t,s_\varrho}\\
      P_{t^+} \widetilde{B}_{t^+} E_t^*
    \end{pmatrix}.
\end{equation*}
Due to $(t,s_i)\in\lfaIJ$, we have
$B_{t,s_i} = \widetilde{W}_{s_i} \widetilde{S}_{t,s_i}^*$
for all $i\in\{1,\ldots,\varrho\}$.
Using a simple recursion \cite[Algorithm~16]{BO10}, we can obtain
thin QR factorizations $\widetilde{W}_s = P_{W,s} R_{W,s}$ with
$R_{W,s}\in\bbbr^{(2k)\times(2k)}$ and write $B_t$ as
\begin{align*}
  B_t
  &= \begin{pmatrix}
       P_{W,s_1} R_{W,s_1} S_{t,s_1}^*\\
       \vdots\\
       P_{W,s_\varrho} R_{W,s_\varrho} S_{t,s_\varrho}^*\\
       P_{t^+} \widetilde{B}_{t^+} E_t^*
     \end{pmatrix}
   = \begin{pmatrix}
       P_{W,s_1} & & & \\
       & \ddots & & \\
       & & P_{W,s_\varrho} & \\
       & & & P_{t^+}
     \end{pmatrix}
     \begin{pmatrix}
       R_{W,s_1} S_{t,s_1}^*\\
       \vdots\\
       R_{W,s_\varrho} S_{t,s_\varrho}^*\\
       \widetilde{B}_{t^+} E_t^*
     \end{pmatrix}.
\end{align*}
The left factor is already orthogonal, therefore finding the QR
decomposition of $B_t$ only requires us to find the decomposition
of the right factor with $2k$ columns and $2k(\varrho+1)$ rows.
Since our algorithm does not require the matrices $P_t$ or $P_{W,s}$,
they are not computed.
This leads to a recursive construction \cite[Algorithm~28]{BO10}
that computes all weight matrices $(\widetilde{B}_t)_{t\in\ctI}$
in $\mathcal{O}(n_\Idx k^2)$ operations.

For an ${\mathcal H}^2$-matrix representation, we require a
\emph{nes\-ted} cluster basis, and we briefly summarize an algorithm
\cite[Algorithm~30]{BO10} that directly computes the transfer matrices:
if $t$ is a leaf, we compute the singular value decomposition
of $\widetilde V_t \widetilde B_t^*$ directly and use the
left singular vectors corresponding to the largest singular values
to construct $Q_t$.

If $t$ is not a leaf, we have $\sons(t)=\{t_1,t_2\}$ and compute
$Q_{t_1}$ and $Q_{t_2}$ by recursion.
Since we are looking for a nested cluster basis, we only have to
construct transfer matrices $F_{t_1}$ and $F_{t_2}$ with
\begin{align}\label{eq:Qt_nested}
  Q_t &= \begin{pmatrix}
    Q_{t_1} F_{t_1}\\
    Q_{t_2} F_{t_2}
  \end{pmatrix}
  = \begin{pmatrix}
    Q_{t_1} & \\
    & Q_{t_2}
  \end{pmatrix} \widehat Q_t, &
  \widehat Q_t &:= \begin{pmatrix}
    F_{t_1}\\ F_{t_2}
  \end{pmatrix}.
\end{align}
This leads to
\begin{align*}
  \begin{pmatrix}
    Q_{t_1} & \\
    & Q_{t_2}
  \end{pmatrix} \widehat Q_t
  \widehat Q_t^*
  \begin{pmatrix}
    Q_{t_1}^* & \\
    & Q_{t_2}^*
  \end{pmatrix} \widetilde V_t \widetilde B_t^*
  &= Q_t Q_t^* \widetilde V_t \widetilde B_t^*
   \approx \widetilde V_t \widetilde B_t^*,
\end{align*}
and multiplying both sides with the adjoints $Q_{t_1}^*$ and
$Q_{t_2}^*$ yields
\begin{align*}
  \widehat Q_t \widehat Q_t^*
  \begin{pmatrix}
    Q_{t_1}^* \widetilde V_t|_{\hat t_1\times k}\\
    Q_{t_2}^* \widetilde V_t|_{\hat t_2\times k}
  \end{pmatrix}
  \widetilde B_t^*
  &\approx \begin{pmatrix}
             Q_{t_1}^* & \\
             & Q_{t_2}^*
           \end{pmatrix}
   \widetilde V_t \widetilde B_t^*
   = \begin{pmatrix}
       Q_{t_1}^* \widetilde V_t|_{\hat t_1\times k}\\
       Q_{t_2}^* \widetilde V_t|_{\hat t_2\times k}
     \end{pmatrix} \widetilde B_t^*.
\end{align*}
Since $(\widetilde V_t)_{t\in\ctI}$ is a \emph{nested} cluster basis,
we can use (\ref{eq:nested}) to obtain
\begin{equation*}
  \widehat V_t
  := \begin{pmatrix}
    Q_{t_1}^* \widetilde V_t|_{\hat t_1\times k}\\
    Q_{t_2}^* \widetilde V_t|_{\hat t_2\times k}
  \end{pmatrix}
  = \begin{pmatrix}
    Q_{t_1}^* \widetilde V_{t_1} \widetilde E_{t_1}\\
    Q_{t_2}^* \widetilde V_{t_2} \widetilde E_{t_2}
  \end{pmatrix}
\end{equation*}
and arrive at the approximation
\begin{equation}\label{eq:hatQt_condensed}
  \widehat Q_t \widehat Q_t^* \widehat V_t \widetilde B_t^*
  \approx \widehat V_t \widetilde B_t^*.
\end{equation}
It is of the same form as (\ref{eq:Qt_condensed}), and we can
again use the singular value decomposition to construct the
matrix $\widehat Q_t$.
Splitting $\widehat Q_t$ according to (\ref{eq:Qt_nested}) yields
the required transfer matrices.
Since $\widehat V_t \widetilde B_t^*$ is only a $(2k)\times(2k)$
matrix, the recursive algorithm requires only ${\mathcal O}(n_\Idx k^2)$
operations \cite[Theorem~6.27]{BO10} to construct the entire cluster
basis $(Q_t)_{t\in\ctI}$.

In order to construct the matrices $\widehat V_t$ in
(\ref{eq:hatQt_condensed}) efficiently, the algorithm computes
and stores the matrices $R_t := Q_t^* \widetilde V_t$ that can
be obtained in ${\mathcal O}(k^3)$ operations via
$R_t = \widehat Q_t^* \widehat V_t$ if $t$ is not a leaf.
These matrices are also useful for converting the matrix $\widetilde Z$
to the new basis:
since $Q_t$ is an orthogonal matrix, the best approximation of
$\widetilde{V}_t \widetilde{S}_b \widetilde{W}_s^*$ in the new basis
is given by the orthogonal projection
$Q_t Q_t^* \widetilde{V}_t \widetilde{S}_b \widetilde{W}_s^*
= Q_t R_t \widetilde{S}_b \widetilde{W}_s^*$, so a multiplication
of $\widetilde{S}_b$ with the small matrix $R_t$ is sufficient to
switch a block to the new basis.
Applying this procedure to the entire matrix takes
${\mathcal O}(n_\Idx k^2)$ operations.
The corresponding algorithm for the column basis requires
$\mathcal{O}(n_\Jdx k^2)$ operations.

There are various strategies for choosing the truncation accuracies
in (\ref{eq:Qt_condensed}) and (\ref{eq:hatQt_condensed}), ranging from
simply using a constant accuracy for all clusters to more subtle techniques
that include weighting factors in the matrices $B_{t,s}$ to obtain the
equivalent of variable-order schemes \cite{BOLOME02,BO05a} or to ensure
blockwise relative error bounds \cite[Section~6.8]{BO10}.

To summarize:
the matrix $\widetilde Z = Z + X Y^*$ can be expressed as an
${\mathcal H}^2$-matrix of rank $2k$ using (\ref{eq:tildeZ_h2}),
and the recompression algorithm can be used to construct an approximation
of reduced rank in ${\mathcal O}((n_\Idx+n_\Jdx) k^2)$ operations.
This means that \emph{global} low-rank updates can be carried
out in linear complexity.

\section{Local updates}

The multiplication algorithm applies low-rank updates to
\emph{submatrices}, not to the entire matrix, therefore we have
to modify the algorithm outlined in the previous section.

Let us assume that we want to add a low-rank matrix $X Y^*$ with
$X\in\bbbr^{\hat t_0\times k}$ and $Y\in\bbbr^{\hat s_0\times k}$ to a matrix
block $Z|_{\hat t_0\times\hat s_0}$ with $b_0=(t_0,s_0)\in\ctIJ$ and leave
the remainder of the matrix $Z$ essentially unchanged.
A simple algorithm would be to extend $X$ and $Y$ by zero and
use the algorithm presented before, but this would lead to a
relatively high computational complexity.

If $Z$ is an ${\mathcal H}^2$-matrix, the submatrix
$Z|_{\hat t_0\times\hat s_0}$ is also an ${\mathcal H}^2$-matrix with
cluster bases and coupling matrices taken from subtrees:
let $\ctt$ denote the subtree of $\ctI$ with root $t_0$,
$\cts$ the subtree of $\ctJ$ with root $s_0$, and $\ctts$ the subtree
of $\ctIJ$ with root $b_0=(t_0,s_0)$.
$Z|_{\hat t_0\times\hat s_0}$ is an ${\mathcal H}^2$-matrix
for the block tree $\ctts$ with row basis $(V_t)_{t\in\ctt}$ and
column basis $(W_s)_{s\in\cts}$.
Therefore we can apply the algorithm given above and update the block
in ${\mathcal O}((\#\hat t_0 + \#\hat s_0) k^2)$ operations.

Unfortunately, the result will in general no longer be an
${\mathcal H}^2$-matrix, since $Z|_{(\Idx\setminus\hat t_0)\times\hat s_0}$
and $Z|_{\hat t_0\times(\Jdx\setminus\hat s_0)}$ would still be represented by
the original cluster bases, not by the ones constructed for the
update.

We can fix this problem by switching \emph{all} blocks to the new
cluster bases:
Applying the update to $Z|_{\hat t_0\times\hat s_0}$ yields new cluster
bases $(Q_t)_{t\in\ctt}$ and $(Q_s)_{s\in\cts}$ and, as mentioned above,
also matrices $R_t=Q_t^* \widetilde V_t$ and $R_s=Q_s^* \widetilde W_s$
describing the change of basis.
Using these matrices, we can update the coupling matrices:
\begin{align*}
  S_b &\gets R_t \begin{pmatrix} S_b & 0\\ 0 & I \end{pmatrix} R_s^* &
  &\text{ if } t\in\ctt,\ s\in\cts,\\
  S_b &\gets R_t \begin{pmatrix} S_b\\ 0 \end{pmatrix} &
  &\text{ if } t\in\ctt,\ s\not\in\cts,\\
  S_b &\gets \begin{pmatrix} S_b & 0 \end{pmatrix} R_s^* &
  &\text{ if } t\not\in\ctt,\ s\in\cts.
\end{align*}
In order to obtain an ${\mathcal H}^2$-matrix, we also have to
update the cluster bases $(V_t)_{t\in\ctI}$ and $(W_s)_{s\in\ctJ}$.
Thanks to the nested representation of these bases, this update
is particularly simple:
we replace the original cluster bases in the relevant subtrees
by the new bases
\begin{align*}
  V_t &\gets Q_t &
  &\text{ for all } t\in\ctt,\\
  W_s &\gets Q_s &
  &\text{ for all } s\in\cts.
\end{align*}
and update the transfer matrices by
\begin{align*}
  E_{t_0} &\gets R_{t_0} \begin{pmatrix} E_{t_0}\\ 0 \end{pmatrix}, &
  E_{s_0} &\gets R_{s_0} \begin{pmatrix} E_{s_0}\\ 0 \end{pmatrix}
\end{align*}
in order to ensure that the resulting cluster bases are still
properly nested.

We do not have to change coupling matrices connected to proper
predecessors of $t_0$ or $s_0$, since the corresponding cluster bases
implicitly inherit the update via the transfer matrices $E_{t_0}$
and $E_{s_0}$.

In order to ensure that the new cluster bases are able to approximate
blocks outside of $\ctts$, we have to make sure that the weight
matrices $\widetilde B_t$ are computed correctly:
they have to take \emph{all} blocks $(t,s)$ with $s\in\row(t)$
into account, not only the blocks in $\ctts$.

Applying \cite[Lemma~6.26]{BO10} to the subtrees $\ctt$ and $\cts$
yields that the construction of the weight matrices takes only
${\mathcal O}((\#\hat t_0+\#\hat s_0) k^2)$ operations.
Updating the global cluster bases means only changing the two transfer
matrices $E_{t_0}$ and $E_{s_0}$, and this can be accomplished in
${\mathcal O}(k^3)$ operations.
Updating the coupling matrices requires not more than
\begin{align*}
  \sum_{\substack{(t,s)\in\lfaIJ\\ t\in\ctt}} \kern-10pt (2k)^3
       &+ \sum_{\substack{(t,s)\in\lfaIJ\\ s\in\cts}} \kern-10pt (2k)^3\\
  &\leq 8 C_{\rm sp} \sum_{t\in\ctt} k^3
       + 8 C_{\rm sp} \sum_{s\in\cts} k^3
   = 8 C_{\rm sp} (\#\ctt + \#\cts) k^3
\end{align*}
operations, where $C_{\rm sp}$ is the sparsity constant
\cite{BOHA02} of the block tree $\ctIJ$ satisfying
\begin{align*}
  \#\{ s\in\ctJ\ :\ (t,s)\in\ctIJ \} &\leq C_{\rm sp} &
  &\text{ for all } t\in\ctI,\\
  \#\{ t\in\ctI\ :\ (t,s)\in\ctIJ \} &\leq C_{\rm sp} &
  &\text{ for all } s\in\ctJ.
\end{align*}
With the standard assumptions $k^3 \#\ctt \in {\mathcal O}(k^2 \#\hat t_0)$
and $k^3 \#\cts \in {\mathcal O}(k^2 \#\hat s_0)$, we conclude that these
updates also take no more than ${\mathcal O}((\#\hat t_0+\#\hat s_0) k^2)$
operations.

In summary, we have proven the following complexity estimate:

%
%
\begin{theorem}[Complexity]
\label{th:lowrank_complexity}
Approximating the low-rank update $Z|_{\hat t_0\times\hat s_0} \gets
Z|_{\hat t_0\times\hat s_0} + X Y^*$ of a submatrix with
$b_0=(t_0,s_0)\in\ctIJ$  requires ${\mathcal O}((\#\hat t_0+\#\hat s_0) k^2)$
operations.
\end{theorem}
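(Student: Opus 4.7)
The plan is to read off the claimed complexity by summing the costs of the four stages of the algorithm described just before the theorem statement, each of which has already been bounded individually in the surrounding discussion. The conceptual point is that the update can be viewed as a recompression of the sub-${\mathcal H}^2$-matrix on $\hat t_0\times\hat s_0$ (using the subtrees $\ctt$, $\cts$, and $\ctts$), followed by a cheap globalization step that propagates the new row and column bases up into $\ctI$ and $\ctJ$ via the change-of-basis matrices $R_t = Q_t^* \widetilde V_t$ and $R_s = Q_s^* \widetilde W_s$.

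First, I would invoke the linear-complexity global recompression from the previous section, applied to the ${\mathcal H}^2$-matrix $Z|_{\hat t_0\times\hat s_0} + XY^*$ with $\ctt$ and $\cts$ playing the role of full cluster trees. This produces the orthogonal bases $(Q_t)_{t\in\ctt}$ and $(Q_s)_{s\in\cts}$ together with the matrices $R_t$, $R_s$ in ${\mathcal O}((\#\hat t_0+\#\hat s_0)k^2)$ operations. Second, I would treat the weight matrices $\widetilde B_t$: since $(Q_t)_{t\in\ctt}$ must approximate every admissible block in the full row $\row^*(t)$, not merely those lying in $\ctts$, I would cite \cite[Lemma~6.26]{BO10} applied to $\ctt$ and $\cts$ to see that including the external contributions still costs only ${\mathcal O}((\#\hat t_0+\#\hat s_0)k^2)$. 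Third, by nestedness the global cluster bases are updated merely by replacing $V_t$ and $W_s$ inside the subtrees and rewriting the two transfer matrices $E_{t_0}$ and $E_{s_0}$ at cost ${\mathcal O}(k^3)$. Fourth and last, the coupling matrices $S_b$ of every admissible block with one leg in $\ctt$ or $\cts$ must be conjugated by $R_t$ or $R_s$; each such operation costs ${\mathcal O}(k^3)$ since the matrices involved have at most $2k$ columns, and summing via the sparsity constant $C_{\rm sp}$ yields at most $8 C_{\rm sp}(\#\ctt + \#\cts)k^3$ operations. The standard assumptions $k^3 \#\ctt \in {\mathcal O}(k^2 \#\hat t_0)$ and $k^3 \#\cts \in {\mathcal O}(k^2 \#\hat s_0)$ then collapse this into the claimed bound.

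The main obstacle is the second step: the weight-matrix construction of \cite[Algorithm~28]{BO10} is designed to sweep over an \emph{entire} cluster tree, whereas here it has to absorb contributions from admissible blocks $(t,s)$ with $t\in\ctt$ but $s\notin\cts$ (and symmetrically). One has to check that those "external" blocks enter only at the subtree boundary, namely through the matrices $B_{t,s}$ for $s\in\row(t)\setminus\cts$, and do not trigger any sweep outside $\ctt$ or $\cts$; once this locality is verified, \cite[Lemma~6.26]{BO10} applies and gives the per-subtree linear complexity. All remaining bounds are immediate consequences of the sparsity of $\ctIJ$ and the estimates already recorded in the preceding section.
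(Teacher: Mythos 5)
Your proposal follows the paper's own argument step for step: view the update as a recompression of the sub-${\mathcal H}^2$-matrix over the subtrees $\ctt$, $\cts$ and $\ctts$, make the weight matrices $\widetilde B_t$ account for admissible blocks whose other cluster lies outside the subtree (invoking \cite[Lemma~6.26]{BO10}), propagate the basis change upward solely through the transfer matrices $E_{t_0}$ and $E_{s_0}$, and update the remaining coupling matrices at cost bounded via the sparsity constant $C_{\rm sp}$ and the standard assumptions $k^3\#\ctt \in {\mathcal O}(k^2\#\hat t_0)$, $k^3\#\cts \in {\mathcal O}(k^2\#\hat s_0)$. This is exactly the decomposition and the cost accounting used in the text preceding the theorem, so the proposal is correct and essentially identical to the paper's proof.
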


\section{Complexity of the LR factorization}

Having established an estimate for the complexity of low-rank
updates, we can now turn our attention towards complexity estimates
for the matrix multiplication, forward substitution and the
LR factorization.

Let $t,s,r\in\ctI$ such that $(t,s)\in\ctII$ and $(s,r)\in\ctII$.
We denote the number of operations required to perform the
update
\begin{equation*}
  Z|_{\hat t\times\hat r} \gets Z|_{\hat t\times\hat r}
        + X|_{\hat t\times\hat s} Y|_{\hat s\times\hat r}
\end{equation*}
by $W_{\rm mm}(t,s,r)$.

%
\paragraph{Case 1: $(t,s)$ and $(s,r)$ are subdivided.}

If $(t,s)$ and $(s,r)$ are not leaves of $\ctII$, we use recursion
to perform updates
\begin{equation*}
  Z|_{\hat t'\times\hat r'} \gets Z|_{\hat t'\times\hat r'}
        + X|_{\hat t'\times\hat s'} Y|_{\hat s'\times\hat r'}
\end{equation*}
for all $t'\in\sons^+(t)$, $s'\in\sons^+(s)$ and $r'\in\sons^+(r)$,
where the abbreviation
\begin{align*}
  \sons^+(t) &:= \begin{cases}
    \sons(t) &\text{ if } \sons(t)\neq\emptyset,\\
    \{t\} &\text{ otherwise}
  \end{cases} &
  &\text{ for all } t\in\ctI
\end{align*}
is convenient to express $\sons(t,s)=\sons^+(t)\times\sons^+(s)$.
If $(t,r)$ is also not a leaf of $\ctII$, no additional algebraic
operations are required and we obtain
\begin{equation}\label{eq:mm_recursion}
  W_{\rm mm}(t,s,r)
  = \sum_{\substack{t'\in\sons^+(t),\\s'\in\sons^+(s),\\r'\in\sons^+(r)}}
      W_{\rm mm}(t',s',r').
\end{equation}
If $(t,r)$ is a leaf, we create temporary submatrices for each
block $(t',r')$ with $t'\in\sons^+(t)$ and $r'\in\sons^+(r)$,
fill them as before using recursion, and then extend them by
zero and add them to $Z|_{\hat t\times\hat r}$.

According to Theorem~\ref{th:lowrank_complexity}, we can find
a constant $C_{\rm up}$ such that the low-rank update of the
block $\hat t\times\hat r$ requires not more than
\begin{equation*}
  C_{\rm up} k^2 (\#\hat t + \#\hat r)
\end{equation*}
operations and conclude
\begin{align*}
  W_{\rm mm}(t,s,r)
  &\leq 4 C_{\rm up} k^2 (\#\hat t + \#\hat r)
        + \sum_{\substack{t'\in\sons^+(t),\\s'\in\sons^+(s),\\r'\in\sons^+(r)}}
            W_{\rm mm}(t',s',r'),
\end{align*}
where the additional term is due to the fact that the four temporary
submatrices for $(t_1,s_1)$, $(t_1,s_2)$, $(t_2,s_1)$ and $(t_2,s_2)$
have to be added.

%
\paragraph{Case 2: $(t,s)$ or $(s,r)$ is a leaf.}

If $(s,r)$ is a leaf, we have
$Y|_{\hat s\times\hat r} = V_s S_{s,r} W_r^*$ and obtain
\begin{equation*}
  X|_{\hat t\times\hat s} Y|_{\hat s\times\hat r}
  = X|_{\hat t\times\hat s} V_s S_{s,r} W_r^*
  = \underbrace{X|_{\hat t\times\hat s} V_s}_{=:\widetilde X_{t,s}}
        S_{s,r} W_r^*.
\end{equation*}
Computing $\widetilde X_{t,s} = X|_{\hat t\times\hat s} V_s$ takes
$k$ matrix-vector multiplications with the ${\mathcal H}^2$-matrix
$X|_{\hat t\times\hat s}$, and each of these requires only
${\mathcal O}(k (\#\hat t+\#\hat s))$ operations
\cite[Theorem~3.42]{BO10}.
Multiplying $\widetilde X_{t,s}$ by $S_{s,r}$ requires
${\mathcal O}(k^2 \#\hat t)$ operations, and the low-rank update
\begin{equation*}
  Z|_{\hat t\times\hat r} \gets
    Z|_{\hat t\times\hat r} + (\widetilde X_{t,s} S_{s,r}) W_r^*
\end{equation*}
can be accomplished in not more than $C_{\rm up} k^2 (\#\hat t+\#\hat r)$
operations due to Theorem~\ref{th:lowrank_complexity}.
We conclude that there is a constant $C_{\rm lb}$ such that
\begin{equation*}
  W_{\rm mm}(t,s,r) \leq C_{\rm lb} k^2 (\#\hat t + \#\hat s + \#\hat r)
\end{equation*}
holds in this case.
By similar arguments we get the same estimate for the case that
$(t,s)$ is a leaf.

%
\paragraph{Matrix multiplication.}

Combining Case~1 and Case~2 yields a constant $C_{\rm mb}$ such that
\begin{align*}
  W_{\rm mm}(t,s,r)
  &\leq C_{\rm mb} k^2 (\#\hat t + \#\hat s + \#\hat r)
\intertext{if $(t,s)\in\lfII$ or $(s,r)\in\lfII$ and}
  W_{\rm mm}(t,s,r)
  &\leq C_{\rm mb} k^2 (\#\hat t + \#\hat s + \#\hat r)
         + \sum_{\substack{t'\in\sons^+(t),\\
                          s'\in\sons^+(s),\\
                          r'\in\sons^+(r)}}
           W_{\rm mm}(t',s',r')
\end{align*}
otherwise.
To obtain an estimate for the total complexity, we follow the approach
outlined in \cite[Section~7.7]{BO10}:
we collect the triples $(t,s,r)\in\ctI\times\ctI\times\ctI$ of clusters
for which multiplications are carried out in a \emph{triple tree}
$\ctIII$.
Its root is given by
$\treeroot(\ctIII)=(\treeroot(\ctI),\treeroot(\ctI),\treeroot(\ctI))$.
In Case~1, i.e., if $(t,s)\not\in\lfII$ and $(s,r)\not\in\lfII$, we let
\begin{equation*}
  \sons(t,s,r) := \sons^+(t)\times\sons^+(s)\times\sons^+(r).
\end{equation*}
Otherwise we have Case~2, i.e., $(t,s)\in\lfII$ or $(s,r)\in\lfII$,
and since no recursion takes place, we let
\begin{equation*}
  \sons(t,s,r) := \emptyset.
\end{equation*}
Using the triple tree, our complexity estimate can be expressed in
the form
\begin{align*}
  W_{\rm mm}(t_0,s_0,r_0)
  &\leq C_{\rm mb} k^2 \kern-20pt \sum_{(t,s,r)\in\cttsr} \kern-20pt
             (\#\hat t + \#\hat s + \#\hat r) &
  &\text{ for all } (t_0,s_0,r_0)\in\ctIII,
\end{align*}
where $\cttsr$ denotes the subtree of $\ctIII$ with the
root $(t_0,s_0,r_0)$.

As in \cite[Lemma~8.8]{BO10}, $(t,s,r)\in\ctIII$ implies
$(t,s)\in\ctII$ and $(s,r)\in\ctII$, so we can prove that
each $t$, $s$ or $r\in\ctI$ appears in not more than $C_{\rm sp}^2$
triples in $\ctIII$.
Denoting the depth of the cluster tree $\ctI$ by $p_\Idx$ and
using the standard estimate
\begin{equation*}
  \sum_{t\in\ctt} \#\hat t \leq (p_\Idx+1) \#\hat t_0
\end{equation*}
(cf., e.g., \cite[Corollary~3.10]{BO10}), we obtain the
following result:

%
%
\begin{theorem}[Complexity, multiplication]
\label{th:multiplication_complexity}
Let $p_\Idx$ denote the depth of the cluster tree $\ctI$.
We have
\begin{align*}
  W_{\rm mm}(t_0,s_0,r_0)
  &\leq C_{\rm mm} k^2 (p_\Idx + 1)
         (\#\hat t_0 + \#\hat s_0 + \#\hat r_0) &
  &\text{ for all } (t_0,s_0,r_0)\in\ctIII
\end{align*}
with $C_{\rm mm} := C_{\rm sp}^2 C_{\rm mb}$.
\end{theorem}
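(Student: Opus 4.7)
The plan is to bound the triple sum that already appears immediately before the theorem statement,
\begin{equation*}
  W_{\rm mm}(t_0,s_0,r_0) \leq C_{\rm mb} k^2 \sum_{(t,s,r)\in\cttsr} (\#\hat t + \#\hat s + \#\hat r),
\end{equation*}
by separating the three cluster coordinates, counting multiplicities by means of the sparsity constant $C_{\rm sp}$, and finishing with the standard subtree cardinality estimate. The recursive analysis carried out in Case~1 and Case~2 already reduces the theorem to a purely structural estimate on the triple tree $\ctIII$, so no further algorithmic work is needed.

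First I would split the sum as
\begin{equation*}
  \sum_{(t,s,r)\in\cttsr} (\#\hat t+\#\hat s+\#\hat r)
   = \sum_{(t,s,r)\in\cttsr}\#\hat t
   + \sum_{(t,s,r)\in\cttsr}\#\hat s
   + \sum_{(t,s,r)\in\cttsr}\#\hat r
\end{equation*}
and bound each of the three partial sums by interchanging the order of summation. For the first one, fix $t\in\ctI$; by the analogue of \cite[Lemma~8.8]{BO10} cited in the excerpt, every triple $(t,s,r)\in\ctIII$ satisfies $(t,s)\in\ctII$ and $(s,r)\in\ctII$, so the admissible partners $s$ form a subset of $\{s\in\ctI\ :\ (t,s)\in\ctII\}$, which contains at most $C_{\rm sp}$ elements, and for each such $s$ the admissible partners $r$ form a subset of $\{r\in\ctI\ :\ (s,r)\in\ctII\}$, again containing at most $C_{\rm sp}$ elements. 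Hence any fixed $t$ appears as the first coordinate in at most $C_{\rm sp}^2$ triples of $\ctIII$, and in particular in at most $C_{\rm sp}^2$ triples of $\cttsr$. Rotating the roles of the coordinates gives the same multiplicity bound for fixed $s$ and for fixed $r$.

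Since every $(t,s,r)\in\cttsr$ has $t\in\ctt$, $s\in\cts$, and $r$ in the subtree of $\ctI$ rooted at $r_0$, the multiplicity estimate yields
\begin{equation*}
  \sum_{(t,s,r)\in\cttsr}\#\hat t \leq C_{\rm sp}^2 \sum_{t\in\ctt}\#\hat t,
\end{equation*}
and analogous inequalities for the $s$- and $r$-sums. Applying the standard level-wise estimate $\sum_{t\in\ctt}\#\hat t \leq (p_\Idx+1)\#\hat t_0$ from \cite[Corollary~3.10]{BO10} to each of the three subtrees then gives
\begin{equation*}
  \sum_{(t,s,r)\in\cttsr}(\#\hat t+\#\hat s+\#\hat r)
  \leq C_{\rm sp}^2 (p_\Idx+1) (\#\hat t_0+\#\hat s_0+\#\hat r_0),
\end{equation*}
and multiplying by $C_{\rm mb} k^2$ with the choice $C_{\rm mm}:=C_{\rm sp}^2 C_{\rm mb}$ finishes the proof.

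The only delicate step is the multiplicity bound: one has to translate the single three-coordinate constraint $(t,s,r)\in\ctIII$ into two two-coordinate constraints $(t,s),(s,r)\in\ctII$ through the intermediate cluster, and then apply the two sparsity inequalities in the correct order. Once this is in place, the interchange of summation and the depth estimate are routine.
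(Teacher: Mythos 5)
Your proposal is correct and follows the same route the paper takes: start from the bound $W_{\rm mm}(t_0,s_0,r_0) \leq C_{\rm mb} k^2 \sum_{(t,s,r)\in\cttsr}(\#\hat t+\#\hat s+\#\hat r)$ established just before the theorem, bound the multiplicity of each cluster in $\ctIII$ by $C_{\rm sp}^2$ via the two constraints $(t,s),(s,r)\in\ctII$, and finish with the subtree depth estimate $\sum_{t\in\ctt}\#\hat t\leq(p_\Idx+1)\#\hat t_0$. You simply spell out the multiplicity count that the paper states without proof. One small imprecision: the phrase ``rotating the roles of the coordinates'' is not literally available, since $s$ plays a distinguished intermediate role; the $C_{\rm sp}^2$ bound for fixed $s$ comes from two independent constraints ($t$ and $r$ each at most $C_{\rm sp}$), while for fixed $t$ or fixed $r$ it comes from a chained argument through $s$, exactly as you wrote for the $t$-case. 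All three bounds hold, so the conclusion stands, but a careful write-up would state the three cases separately rather than invoke a symmetry that the constraint set does not have.
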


%
\paragraph{Forward substitution.}

Let us now consider the forward substitution, e.g., solving
\begin{equation*}
  L|_{\hat t\times\hat t} X|_{\hat t\times\hat s} = Y|_{\hat t\times\hat s}
\end{equation*}
for a block $(t,s)\in\ctII$.
We denote the number of operations required by $W_{\rm lfs}(t,s)$.

If $t$ is a leaf, we perform a forward substitution for
a standard matrix, which takes half the operations of the
matrix multiplication, so we certainly have
\begin{subequations}
\begin{equation}\label{eq:forward_leaf}
  W_{\rm lfs}(t,s) \leq W_{\rm mm}(t,t,s).
\end{equation}
If $t$ is not a leaf, we have $\sons(t)=\{t_1,t_2\}$ and our
algorithm performs the following steps
\begin{itemize}
  \item solve $L|_{\hat t_1\times\hat t_1} X|_{\hat t_1\times\hat s'}
        = Y|_{\hat t_1\times\hat s'}$ for all $s'\in\sons^+(s)$,
  \item compute $\widetilde Y|_{\hat t_2\times\hat s'}
        := Y|_{\hat t_2\times\hat s}
           - L|_{\hat t_2\times\hat t_1} X|_{\hat t_1\times\hat s'}$
        for all $s'\in\sons^+(s)$, and
  \item solve $L|_{\hat t_2\times\hat t_2} X|_{\hat t_2\times\hat s'}
        = \widetilde Y|_{\hat t_2\times\hat s'}$
        for all $s'\in\sons^+(s)$.
\end{itemize}
In total, we require
\begin{align}
  W_{\rm lfs}(t,s)
  &\leq \sum_{s'\in\sons^+(s)} W_{\rm lfs}(t_1,s')
        + W_{\rm mm}(t_2,t_1,s') + W_{\rm lfs}(t_2,s')
       \label{eq:forward_nonleaf}
\end{align}
\end{subequations}
operations.
Solving this recurrence relation yields the following estimate:

%
%
\begin{theorem}[Complexity, forward substitution]
\label{th:forward_complexity}
We have
\begin{align}
  W_{\rm lfs}(t,s) &\leq W_{\rm mm}(t,t,s) &
  &\text{ for all } (t,s)\in\ctII.\label{eq:forward_complexity}
\end{align}
\end{theorem}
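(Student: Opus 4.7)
The plan is to prove (\ref{eq:forward_complexity}) by structural induction on $t\in\ctI$, using the two parts of the recurrence (\ref{eq:forward_leaf})--(\ref{eq:forward_nonleaf}) and comparing them termwise with the corresponding recurrence for $W_{\rm mm}(t,t,s)$ derived from (\ref{eq:mm_recursion}).

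For the base case, when $t\in\lfI$, the desired inequality is precisely (\ref{eq:forward_leaf}) and nothing has to be done.

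For the inductive step, take $t\not\in\lfI$ with $\sons(t)=\{t_1,t_2\}$. By (\ref{eq:forward_nonleaf}), we have
\begin{equation*}
  W_{\rm lfs}(t,s)
    \leq \sum_{s'\in\sons^+(s)}
       \bigl[ W_{\rm lfs}(t_1,s') + W_{\rm mm}(t_2,t_1,s') + W_{\rm lfs}(t_2,s') \bigr].
\end{equation*}
Applying the induction hypothesis to the two terms $W_{\rm lfs}(t_i,s')$ with $i\in\{1,2\}$ replaces them by $W_{\rm mm}(t_i,t_i,s')$, so that
\begin{equation*}
  W_{\rm lfs}(t,s)
    \leq \sum_{s'\in\sons^+(s)}
       \bigl[ W_{\rm mm}(t_1,t_1,s') + W_{\rm mm}(t_2,t_1,s') + W_{\rm mm}(t_2,t_2,s') \bigr].
\end{equation*}
Now I would invoke the recurrence (\ref{eq:mm_recursion}) for $W_{\rm mm}(t,t,s)$: since $t\not\in\lfI$ implies $\sons(t,t)=\sons(t)\times\sons(t)$ in the block tree (the diagonal block $(t,t)$ is subdivided whenever $t$ has sons, as it is never admissible), the product $W_{\rm mm}(t,t,s)$ decomposes as
\begin{equation*}
  W_{\rm mm}(t,t,s)
  \geq \sum_{s'\in\sons^+(s)} \sum_{t',t''\in\sons^+(t)}
         W_{\rm mm}(t',t'',s'),
\end{equation*}
and the inner sum enumerates all four pairs $(t_1,t_1),(t_1,t_2),(t_2,t_1),(t_2,t_2)$. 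Dropping the nonnegative term $W_{\rm mm}(t_1,t_2,s')$ gives exactly the upper bound obtained for $W_{\rm lfs}(t,s)$, finishing the induction.

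The only nonroutine point is the matching between the three recursive calls in the forward substitution (two triangular solves on the diagonal sub-blocks plus one multiplication for the off-diagonal update) and four of the eight recursive calls of the multiplication $W_{\rm mm}(t,t,s)$; the missing piece $(t_1,t_2)$ corresponds to the multiplication that forward substitution can skip because $L$ has vanishing upper-triangular block, which is the structural reason the inequality holds with a clean constant of one. Once this correspondence is set up, the induction itself is immediate.
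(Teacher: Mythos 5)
The proposal is correct and follows essentially the same route as the paper: structural induction on $t$, bound via~(\ref{eq:forward_nonleaf}), replace the two $W_{\rm lfs}$ terms by $W_{\rm mm}$ via the induction hypothesis, add the nonnegative term $W_{\rm mm}(t_1,t_2,s')$ to complete the double sum over $\sons^+(t)\times\sons^+(t)$, and absorb it into $W_{\rm mm}(t,t,s)$. The observation about the missing $(t_1,t_2)$ term corresponding to the zero upper-triangular block of $L$ is a useful piece of intuition that the paper omits, but the mathematics is identical.
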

\begin{proof}
By structural induction in $t\in\ctI$.

If $t\in\ctI$ is a leaf, (\ref{eq:forward_complexity}) follows
directly from (\ref{eq:forward_leaf}).

If  $t\in\ctI$ is not a leaf, let $\sons(t)=\{t_1,t_2\}$ and
assume that (\ref{eq:forward_complexity}) holds for $(t_1,s')$
and $(t_2,s')$ for all $s'\in\sons^+(s)$.
With (\ref{eq:forward_nonleaf}), we get
\begin{align*}
  W_{\rm lfs} &(t,s)
   \leq \sum_{s'\in\sons^+(t)} W_{\rm lfs}(t_1,s')
                + W_{\rm mm}(t_2,t_1,s')\\
  &\qquad + W_{\rm lfs}(t_2,s')\\
  &\leq \sum_{s'\in\sons^+(t)} \kern -8pt W_{\rm mm}(t_1,t_1,s')
                + W_{\rm mm}(t_2,t_1,s')\\
  &\qquad + W_{\rm mm}(t_2,t_2,s')\\
  &\leq \sum_{s'\in\sons^+(t)} \kern -8pt W_{\rm mm}(t_1,t_1,s')
            + W_{\rm mm}(t_2,t_1,s')\\
  &\qquad + W_{\rm mm}(t_1,t_2,s')
            + W_{\rm mm}(t_2,t_2,s')\\
  &= \sum_{t'\in\sons^+(t)} \sum_{t''\in\sons^+(t)} \sum_{s'\in\sons^+(s)}
            W_{\rm mm}(t',t'',s')\\
  &\leq W_{\rm mm}(t,t,s).
\end{align*}
This completes the induction.
\qed
\end{proof}

Our algorithm for solving $X|_{\hat t\times\hat s} R|_{\hat s\times\hat s}
= Y|_{\hat t\times\hat s}$ can be treated by the same arguments to find
that the number of operations $W_{\rm rfs}(t,s)$ is bounded by $W_{\rm mm}(t,s,s)$.

%
\paragraph{LR factorization.}

Finally we consider the LR factorization, e.g., finding
$L|_{\hat t\times\hat t},R|_{\hat t\times\hat t}$ such that
\begin{equation*}
  L|_{\hat t\times\hat t} R|_{\hat t\times\hat t} = A|_{\hat t\times\hat t}.
\end{equation*}
Let $W_{\rm lr}(t)$ denote the number of required operations.

If $t$ is a leaf, computing the LR factorization directly
requires one third of the number of operations required to
compute the matrix product, so we certainly have
\begin{subequations}
\begin{equation}\label{eq:lr_leaf}
  W_{\rm lr}(t) \leq W_{\rm mm}(t,t,t).
\end{equation}
If $t$ is not a leaf, i.e., if $\sons(t)=\{t_1,t_2\}$ holds, our
recursive algorithm performs the following steps:
\begin{itemize}
  \item compute the factorization of $A|_{\hat t_1\times\hat t_1}$,
  \item solve $L|_{\hat t_1\times\hat t_1} R|_{\hat t_1\times\hat t_2} =
    A|_{\hat t_1\times\hat t_2}$ for $R|_{\hat t_1\times\hat t_2}$,
  \item solve $L|_{\hat t_2\times\hat t_1} R|_{\hat t_1\times\hat t_1} =
    A|_{\hat t_2\times\hat t_1}$ for $L|_{\hat t_2\times\hat t_1}$,
  \item compute $\widetilde A|_{\hat t_2\times\hat t_2}
    = A|_{\hat t_2\times\hat t_2} - L|_{\hat t_2\times\hat t_1}
      R|_{\hat t_1\times\hat t_2}$,
  \item compute the factorization of $\widetilde A|_{\hat t_2\times\hat t_2}$,
\end{itemize}
and this takes
\begin{align}
  W_{\rm lr} (t)
  &= W_{\rm lr}(t_1) + W_{\rm lfs}(t_1,t_2) + W_{\rm rfs}(t_2,t_1)
     + W_{\rm mm}(t_2,t_1,t_2) + W_{\rm lr}(t_2)\notag\\
  &\leq W_{\rm lr}(t_1) + W_{\rm mm}(t_1,t_1,t_2) + W_{\rm mm}(t_2,t_1,t_1)
     + W_{\rm mm}(t_2,t_1,t_2) + W_{\rm lr}(t_2)\label{eq:lr_nonleaf}
\end{align}
\end{subequations}
operations due to Theorem~\ref{th:forward_complexity}.
We can solve this recurrence relation to obtain the following
result:

%
%
\begin{theorem}[Complexity, LR]
\label{th:lr_complexity}
We have
\begin{align}\label{eq:lr_complexity}
  W_{\rm lr}(t) &\leq W_{\rm mm}(t,t,t) &
  &\text{ for all } t\in\ctI.
\end{align}
\end{theorem}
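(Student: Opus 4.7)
The proof will proceed by structural induction on $t \in \ctI$, mirroring the approach used for Theorem~\ref{th:forward_complexity}. The base case is immediate: if $t$ is a leaf, the claim (\ref{eq:lr_complexity}) is exactly the content of (\ref{eq:lr_leaf}).

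For the inductive step, suppose $\sons(t) = \{t_1, t_2\}$ and that (\ref{eq:lr_complexity}) holds for $t_1$ and $t_2$. Starting from the recurrence (\ref{eq:lr_nonleaf}), I apply the induction hypothesis to replace $W_{\rm lr}(t_1)$ with $W_{\rm mm}(t_1, t_1, t_1)$ and $W_{\rm lr}(t_2)$ with $W_{\rm mm}(t_2, t_2, t_2)$. This yields a bound on $W_{\rm lr}(t)$ as a sum of five multiplication costs, one for each of the triples $(t_1, t_1, t_1)$, $(t_1, t_1, t_2)$, $(t_2, t_1, t_1)$, $(t_2, t_1, t_2)$, $(t_2, t_2, t_2)$.

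The crucial observation is that each of these five triples lies in $\sons^+(t) \times \sons^+(t) \times \sons^+(t)$, which has eight elements in total. Since all $W_{\rm mm}$-values are nonnegative, I can enlarge the sum by adding the three missing triples $(t_1, t_2, t_1)$, $(t_1, t_2, t_2)$, $(t_2, t_2, t_1)$ to obtain the complete sum over $\sons^+(t)^3$. Because $t$ has sons, the block $(t,t)$ is not a leaf of $\ctII$, and Case~1 of the multiplication analysis (the recurrence (\ref{eq:mm_recursion}) together with its Case~1 bound) gives that this complete eight-term sum is at most $W_{\rm mm}(t,t,t)$.

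The argument is essentially a bookkeeping exercise and I do not expect any real obstacle; the only subtle point is making sure that (\ref{eq:mm_recursion}) is applicable to $W_{\rm mm}(t,t,t)$, which it is precisely because $t$ is not a leaf of $\ctI$ and therefore $(t,t)$ is not a leaf of $\ctII$. The proof is then completed by chaining the inequalities, as in the forward substitution case.
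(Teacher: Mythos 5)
Your proposal is correct and follows essentially the same structural induction that the paper uses: apply the induction hypothesis to the two recursive $W_{\rm lr}$ terms in (\ref{eq:lr_nonleaf}), note that the resulting five $W_{\rm mm}$-terms form a subset of $\sons^+(t)\times\sons^+(t)\times\sons^+(t)$, enlarge by nonnegativity, and bound the full eight-term sum by $W_{\rm mm}(t,t,t)$ via the multiplication recurrence. The one point worth noting is that since $t$ is not a leaf, $(t,t)\notin\lfII$ and also $(t,t)$ as the target block is not a leaf, so (\ref{eq:mm_recursion}) applies with equality there, exactly as you observe.
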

\begin{proof}
By structural induction in $t\in\ctI$.

If $t\in\ctI$ is a leaf, (\ref{eq:lr_complexity}) follows
directly from (\ref{eq:lr_leaf}).

If $t\in\ctI$ is not a leaf, we have $\sons(t)=\{t_1,t_2\}$ and
assume that (\ref{eq:lr_complexity}) holds for $t_1$ and $t_2$.
With (\ref{eq:lr_nonleaf}), we get
\begin{align*}
  W_{\rm lr} (t)
  &\leq W_{\rm lr}(t_1) + W_{\rm mm}(t_1,t_1,t_2)\\
  &\qquad + W_{\rm mm}(t_2,t_1,t_1) + W_{\rm mm}(t_2,t_1,t_2) + W_{\rm lr}(t_2)\\
  &\leq W_{\rm mm}(t_1,t_1,t_1) + W_{\rm mm}(t_1,t_1,t_2)\\
  &\qquad + W_{\rm mm}(t_2,t_1,t_1) + W_{\rm mm}(t_2,t_1,t_2)
          + W_{\rm mm}(t_2,t_2,t_2)\\
  &\leq \sum_{\substack{t'\in\sons^+(t),\\
                        t''\in\sons^+(t),\\
                        t'''\in\sons^+(t)}}
         W_{\rm mm}(t',t'',t''')
   \leq W_{\rm mm}(t,t,t).
\end{align*}
This completes the induction.
\qed
\end{proof}

Now we can state the final result for the complexity of the
approximation of the LR factorization:

%
%
\begin{corollary}[Total complexity]
Our algorithm constructs the approximate LR factorization of
local rank $k$ in ${\mathcal O}(k^2 (p_\Idx+1) n_\Idx)$ operations, where
$p_\Idx$ denotes the depth of the cluster tree.
\end{corollary}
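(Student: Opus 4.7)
The proof is essentially a composition of the two complexity theorems already established, evaluated at the root of the cluster tree. The plan is to read off the total cost of the LR factorization from $W_{\rm lr}(r)$, where $r = \treeroot(\ctI)$, and then bound that quantity using Theorem~\ref{th:lr_complexity} followed by Theorem~\ref{th:multiplication_complexity}.

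First I would observe that by definition $\hat r = \Idx$, so $\#\hat r = n_\Idx$, and the total work to factorize $A$ is precisely $W_{\rm lr}(r)$. Applying Theorem~\ref{th:lr_complexity} to $t = r$ gives
\begin{equation*}
  W_{\rm lr}(r) \leq W_{\rm mm}(r,r,r).
\end{equation*}
Since $(r,r,r) \in \ctIII$ (its root), Theorem~\ref{th:multiplication_complexity} applies and yields
\begin{equation*}
  W_{\rm mm}(r,r,r) \leq C_{\rm mm} k^2 (p_\Idx + 1)(\#\hat r + \#\hat r + \#\hat r)
   = 3\, C_{\rm mm} k^2 (p_\Idx + 1)\, n_\Idx.
\end{equation*}
Combining these two estimates gives $W_{\rm lr}(r) \in \mathcal{O}(k^2 (p_\Idx+1) n_\Idx)$, which is the claim.

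There is no real obstacle here: the hard work has already been done in Theorems~\ref{th:lowrank_complexity}, \ref{th:multiplication_complexity} and \ref{th:lr_complexity}, and the corollary is simply the specialization of those bounds to the root cluster, together with the identification $\#\hat r = n_\Idx$. The only thing worth spelling out explicitly in the final write-up is why $(r,r,r)$ actually belongs to $\ctIII$ (it is the root, by construction of the triple tree) so that Theorem~\ref{th:multiplication_complexity} may be invoked.
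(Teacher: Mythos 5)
Your proposal is correct and follows exactly the paper's own argument: the paper's one-line proof is precisely ``combine Theorem~\ref{th:multiplication_complexity} with Theorem~\ref{th:lr_complexity} and apply it to $t=\treeroot(\ctI)$,'' which is what you spell out in detail.
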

\begin{proof}
Combine Theorem~\ref{th:multiplication_complexity} with
Theorem~\ref{th:lr_complexity} and apply it to $t=\treeroot(\ctI)$.
\qed
\end{proof}

%
%
\begin{remark}[Inversion]
\label{rm:inversion}
By a similar approach, we can also obtain an approximation of the
inverse:
consider $A|_{\hat t\times\hat t}^{-1}$ for $t\in\ctI$.
If $t$ is a leaf, we can compute the inverse directly.
Otherwise we let
\begin{equation*}
  A|_{\hat t\times\hat t}
  = \begin{pmatrix}
    A_{11} & A_{12}\\
    A_{21} & A_{22}
  \end{pmatrix}
\end{equation*}
as before and perform a block Gauss elimination to obtain
\begin{equation*}
  A|_{\hat t\times\hat t}
  = \begin{pmatrix}
    I & \\
    A_{21} A_{11}^{-1} & I
  \end{pmatrix}
  \begin{pmatrix}
    A_{11} & A_{12}\\
    & A_{22} - A_{21} A_{11}^{-1} A_{12}
  \end{pmatrix}.
\end{equation*}
We denote the Schur complement by $S := A_{22} - A_{21} A_{11}^{-1} A_{12}$,
compute the inverses of the two block triangular matrices, and find
the representation
\begin{equation*}
  C := A|_{\hat t\times\hat t}^{-1}
  = \begin{pmatrix}
    A_{11}^{-1} & -A_{11}^{-1} A_{12} S^{-1}\\
    & S^{-1}
  \end{pmatrix}
  \begin{pmatrix}
    I & \\
    -A_{21} A_{11}^{-1} & I
  \end{pmatrix}
\end{equation*}
of the inverse.
This equation allows us to compute the inverse $A|_{\hat t\times\hat t}^{-1}$
by recursively computing $A_{11}^{-1}$ and $S^{-1}$ and carrying out
six matrix multiplications:
we start with $A_{11}^{-1}$, compute $B_{12} := A_{11}^{-1} A_{12}$,
$B_{21} := A_{21} A_{11}^{-1}$ and $S = A_{11} - A_{21} B_{12}$,
determine $C_{22} = S^{-1}$ by recursion, and finish by computing
$C_{12} = -A_{11}^{-1} A_{12} S^{-1} = -B_{12} S^{-1}$,
$C_{21} = -S^{-1} A_{21} A_{11}^{-1} = -S^{-1} B_{21}$ and
$C_{11} = A_{11}^{-1} + A_{11}^{-1} A_{12} S^{-1} A_{21} A_{11}^{-1}
 = A_{11}^{-1} - C_{12} B_{21}$.
Following the same reasoning as before, we can prove that the
number of operations $W_{\rm inv}(t)$ is bounded by
$W_{\rm mm}(t,t,t)$, and Theorem~\ref{th:multiplication_complexity}
yields that ${\mathcal O}(k^2 (p_\Idx+1) n_\Idx)$ operations are
sufficient to construct the approximate inverse.
\end{remark}

\section{Numerical experiments}

We investigate the practical properties of the new algorithms by
considering two standard model problems:
for the first model problem, we consider the linear system
resulting from a finite element discretization of Poisson's
equation on the unit square using piecewise linear nodal basis functions
on a regular mesh.
We use a domain decomposition cluster strategy similar to the
one described in \cite{GRKRLE05a} to find a suitable cluster tree
and block tree for the sparse system.
Since the matrix is symmetric and positive definite, we construct
an approximate Cholesky factorization by the algorithms described
in Section~\ref{se:algebraic_operations} using a block-relative
accuracy of $\hat\epsilon\in\bbbr_{>0}$ for the recompression.

%
%
\begin{table}
  \begin{equation*}
    \begin{array}{rr|rr|rr|rrr}
      \multicolumn{2}{c|}{\text{Grid}} &
      \multicolumn{2}{c|}{\text{Param}} &
      \multicolumn{2}{c|}{\text{Setup}} &
      \multicolumn{3}{c}{\text{Solve}}\\
      \ell & n & \eta & \hat\epsilon & \text{Time}/n & \text{Mem}/n
      &\text{Err} & m & \text{Time/n} \\
      \hline
      7 &      16\,129 & 4 & 3.1_{-3} & 7.0_{-5} & 1.0 & 0.06 & 3 & 1.9_{-6}\\
      8 &      65\,025 & 4 & 7.7_{-4} & 8.4_{-5} & 1.1 & 0.07 & 3 & 2.3_{-6}\\
      9 &     261\,121 & 4 & 1.9_{-4} & 1.1_{-4} & 1.2 & 0.07 & 3 & 2.6_{-6}\\
     10 &  1\,046\,529 & 4 & 4.8_{-5} & 1.3_{-4} & 1.2 & 0.10 & 3 & 2.8_{-6}\\
     11 &  4\,190\,209 & 4 & 1.2_{-5} & 1.5_{-4} & 1.2 & 0.11 & 3 & 3.0_{-6}\\
     12 & 16\,769\,025 & 4 & 3.0_{-6} & 1.7_{-4} & 1.2 & 0.10 & 3 & 3.5_{-6}
    \end{array}
  \end{equation*}
  \caption{Preconditioner for the finite element model problem.}
  \label{ta:fem2d}
\end{table}

Table~\ref{ta:fem2d} lists the results for grid levels $7$ to $12$
with $16\,129$ to $16\,769\,025$ grid points.
The column ``Param'' gives the admissibility parameter $\eta$ and
the relative accuracy $\hat\epsilon$ used in the recompression
algorithm, the column ``Setup'' gives the time (in seconds per degree
of freedom) and storage requirements (in KB per degree of freedom) for
constructing the Cholesky factorization, and the column ``Solve''
gives an approximation of the convergence factor $\|I-\widetilde A^{-1} A\|_2$
obtained by the power iteration, the number $m$ of iteration
steps required by the conjugate gradient iteration to reduce the
relative residual norm below $10^{-8}$, and the time required
per step and degree of freedom.
The experiments were carried out on a single core of an AMD Opteron 8431
processor.

We can see that choosing the accuracy $\hat\epsilon$ like ${\mathcal O}(h^2)$
to keep up with the ${\mathcal O}(h^{-2})$ growth of the condition number
is sufficient to obtain a stable convergence rate.
The time per degree of freedom grows like $\log(n)$, the storage
requirements are bounded.
This suggests that, surprisingly, our strategy for controlling the
accuracy leads to ranks that are constant on average.

For the second model problem, we consider the linear system resulting
from the finite element discretization of the single layer potential
operator on the two-dimensional unit circle using a regular mesh and
piecewise constant basis functions.

%
%
\begin{table}
  \begin{equation*}
    \begin{array}{rr|rr|rr|rrr}
      \multicolumn{2}{c|}{\text{Grid}} &
      \multicolumn{2}{c|}{\text{Param}} &
      \multicolumn{2}{c|}{\text{Setup}} &
      \multicolumn{3}{c}{\text{Solve}}\\
      \ell & n & \eta & \hat\epsilon & \text{Time}/n & \text{Mem}/n
      &\text{Err} & m & \text{Time/n} \\
      \hline
     11 &      8\,192 & 2 & 1.2_{-4} & 1.6_{-4} & 0.8 & 0.05 & 3 & 1.6_{-6}\\
     12 &     16\,384 & 2 & 6.1_{-5} & 1.9_{-4} & 0.8 & 0.03 & 3 & 1.6_{-6}\\
     13 &     32\,768 & 2 & 3.1_{-5} & 2.1_{-4} & 0.8 & 0.03 & 3 & 1.7_{-6}\\
     14 &     65\,536 & 2 & 1.5_{-5} & 2.4_{-4} & 0.8 & 0.02 & 3 & 1.8_{-6}\\
     15 &    131\,072 & 2 & 7.6_{-6} & 2.6_{-4} & 0.8 & 0.02 & 3 & 1.9_{-6}\\
     16 &    262\,144 & 2 & 3.8_{-6} & 3.0_{-4} & 0.8 & 0.02 & 3 & 2.0_{-6}\\
     17 &    524\,288 & 2 & 1.9_{-6} & 3.3_{-4} & 0.8 & 0.02 & 3 & 2.1_{-6}\\
     18 & 1\,048\,576 & 2 & 9.5_{-7} & 3.5_{-4} & 0.8 & 0.01 & 4 & 2.2_{-6}\\
     19 & 2\,097\,152 & 2 & 4.8_{-7} & 3.6_{-4} & 0.8 & 0.02 & 3 & 2.2_{-6}\\
     20 & 4\,194\,304 & 2 & 2.4_{-7} & 3.8_{-4} & 0.8 & 0.20 & 4 & 2.2_{-6}
    \end{array}
  \end{equation*}
  \caption{Preconditioner for the boundary element model problem.}
  \label{ta:bem2d}
\end{table}

Table~\ref{ta:bem2d} lists the results for this model problem.
Choosing $\hat\epsilon$ like ${\mathcal O}(h)$ compensates for the
${\mathcal O}(h^{-1})$ growth of the condition number and leads
to stable convergence rates.
As in the case of the partial differential equation, the time per
degree of freedom grows like $\log(n)$ and the storage requirements
per degree of freedom are bounded.
This suggests that also in this case our algorithm chooses ranks
that are constant on average.

We can conclude that the new preconditioner requires a setup time
of ${\mathcal O}(n \log n)$ and ${\mathcal O}(n)$ units of storage
to ensure stable $h$-independent convergence of the conjugate
gradient method.

\bibliographystyle{plain}
\bibliography{scicomp}

\end{document}